\documentclass[preprint,a4paper]{elsarticle}

\usepackage[T1]{fontenc}
\usepackage[utf8]{inputenc}
\usepackage[english]{babel}
\usepackage{libertinus}
\usepackage{microtype}
\usepackage{mathtools, amsfonts, amssymb, amsthm}
\usepackage{enumitem}
\setlist{nosep}
\usepackage{xcolor}
\usepackage{graphicx}
\usepackage{doi}
\usepackage{hyperref}
\hypersetup{
  colorlinks=true,
  linkcolor=blue,
  filecolor=magenta,
  urlcolor=cyan,
  citecolor=blue
}
\usepackage{bookmark}
\usepackage[nameinlink,noabbrev]{cleveref}
\usepackage{thmtools} 

\declaretheorem[name=Theorem,numberwithin=section]{theorem}

\declaretheorem[name=Proposition,sibling=theorem]{proposition}
\declaretheorem[name=Corollary,sibling=theorem]{corollary}

\declaretheorem[style=definition,name=Remark,sibling=theorem]{remark}
\declaretheorem[style=definition,name=Example,sibling=theorem]{example}

\begin{document}

\begin{frontmatter}

\title{From curvature to Kovacic: a geometric approach to integrability of scalar ODEs}

\author[uca]{A.~J. Pan-Collantes}
\ead{antonio.pan@uca.es}

\author[iesj]{J.~A. \'{A}lvarez-Garc\'{\i}a}
\ead{jose.alvg@gmail.com}

\address[uca]{Departamento de Matem\'aticas, Universidad de C\'adiz -- UCA, Puerto Real, Spain}
\address[iesj]{Departamento de Matem\'aticas, IES Jorge Juan, Junta de Andaluc\'\i{}a, San Fernando, Spain}

\begin{abstract}
We study first-order ordinary differential equations such that the intrinsic Gauss curvature of the associated surface depends only on the independent variable: $\mathcal{K}(x,u)=\kappa(x)$, showing that this geometrically motivated class of equations admits a threefold connection to the second-order linear operator $L=d^2/dx^2+\kappa(x)$: the divergence along every solution satisfies a Riccati equation that linearizes to $L(y)=0$; every solution of the first-order equation satisfies the non-homogeneous equation $L(u)=c(x)$; and solutions of $L(y)=0$ give rise to integrating factors for the original nonlinear equation. By means of differential Galois theory, we prove that the nonlinear equation is integrable by quadratures if and only if $L$ admits a non-zero Liouvillian solution; when $\kappa$ is rational, Kovacic's algorithm provides a complete decision procedure.
\end{abstract}

\begin{keyword}
geometric integrability \sep intrinsic curvature \sep Riccati equation \sep Schr\"{o}dinger operator \sep differential Galois theory \sep Kovacic's algorithm \sep Liouvillian solutions
\MSC[2020]{34A05, 34B24, 34C14, 12H05, 53A35}
\end{keyword}

\end{frontmatter}

\section{Introduction}\label{sec:intro}

The integration of scalar first-order ordinary differential equations (ODEs) 
\begin{equation}\label{eq:ode1_intro}
u'(x) = \phi(x, u)
\end{equation}
is a fundamental problem. Since Lie's foundational work, the dominant approach has been symmetry analysis: if the equation admits a one-parameter group of point symmetries, it can be reduced to quadratures \cite{Olver1993, BlumanAnco2002, Stephani1989}. Nevertheless, the search for symmetries remains a nontrivial task for a general equation, and complementary routes to integrability are of considerable interest.

The present paper develops one such route for a geometrically distinguished class of equations, based on the intrinsic curvature of a Riemannian surface associated with the equation. The idea of attaching geometric structures to differential equations has a long history, from Cartan's equivalence problem \cite{Olver1995} to Arnold's geometric methods \cite{ArnoldODE1992}. In the framework introduced in \cite{Bayrakdar2018a,surfaces2025,integrationJacobifields}, a Riemannian metric is defined on the $(x,u)$-plane so that the solutions of $u' = \phi(x,u)$ correspond to a distinguished family of geodesics. The Gauss curvature of this surface, $\mathcal K$, encodes integrability information. 

In prior work \cite{surfaces2025, integrationJacobifields}, the authors showed that when $\mathcal{K}$ is constant the equation is integrable by quadratures. The present paper addresses the next natural generalization: the class of equations for which $\mathcal{K}(x,u) = \kappa(x)$, i.e., the curvature depends on the independent variable alone.

It was already shown in \cite{integrationJacobifields} that this curvature condition allows for the construction of integrating factors from solutions $y$ of $L(y)=0$, where $L$ is the operator
\begin{equation}\label{eq:schrodinger_operator}
L=\frac{d^2}{dx^2}+\kappa(x).
\end{equation}
Our first result (\Cref{thm:riccati_connection}) establishes that the curvature condition $\mathcal{K}(x,u) = \kappa(x)$ holds if and only if the divergence of the vector field associated with the ODE, evaluated along every solution, satisfies the Riccati equation $p'+p^2+\kappa(x)=0$. Through the classical substitution $p=y'/y$ \cite{Ince1956, Reid1972}, the Schr\"odinger equation $y''+\kappa(x)y=0$ emerges, providing another connection of the original nonlinear ODE to the linear operator $L$ in \eqref{eq:schrodinger_operator}.

The link to this linear operator runs deeper. We prove (\Cref{prop:nonhomog}) that every solution $u$ of the first-order ODE \eqref{eq:ode1_intro} satisfying $\mathcal{K}(x,u)=\kappa(x)$ is also a solution of the non-homogeneous linear equation 
\begin{equation}\label{eq:nonhomog_intro}
L(u)=c
\end{equation}
where $c=c(x)$ is determined by $\phi$ but independent of $u$. Moreover, this embedding characterizes the class: a first-order ODE belongs to the relevant class if and only if its solution set $\Gamma$ is contained in the two-dimensional affine space $S$ generated by the kernel of $L$ and a fixed particular solution (\Cref{prop:nonhomog}). 

The inclusion $\Gamma\subset S$ has further consequences for integrability. Using differential Galois theory \cite{vanderPutSinger2003, Singer1981}, we prove (\Cref{thm:galois_integrability}) that the first-order ODE is integrable by quadratures whenever the operator $L$ admits a non-zero Liouvillian solution. When $\kappa$ is rational, Kovacic's algorithm \cite{Kovacic1986} provides an effective decision procedure, making Liouvillian integrability algorithmically decidable for this class of nonlinear equations.
This is complementary to the Prelle--Singer procedure \cite{PrelleSinger1983} and its extensions to first-order ODEs with Liouvillian solutions \cite{DuarteDuarteMota2002a,DuarteDuarteMota2002b}, and to Prelle--Singer-based methods for linearization of nonlinear ODEs \cite{ChandrasekharEtAl2005}: rather than searching directly in the nonlinear equation, our approach exploits the reduction to the Galois theory of~$L$.

The paper is organized as follows. \Cref{preliminaries} reviews the geometric framework of Riemannian surfaces associated with first-order ODEs and recalls the curvature formula. \Cref{section:riccati} establishes the equivalence between the curvature condition $\mathcal{K}(x,u)=\kappa(x)$ and the Riccati dynamics of the flow divergence along solutions (\Cref{thm:riccati_connection}). \Cref{sec:embedding} proves that every solution of the nonlinear ODE satisfies a fixed non-homogeneous second-order linear equation (\Cref{prop:nonhomog}), and characterizes the class as precisely those first-order ODEs whose solution set is contained in the affine space $S$ (\Cref{prop:nonhomog}). \Cref{sec:jacobi} recalls the integrating-factor construction and connects it to the Riccati--Schrödinger link. \Cref{geometric_interpretation} provides a projective interpretation: the Riccati solutions arising from \Cref{thm:riccati_connection} are identified with tangent directions to the solution locus $\Gamma$ inside the affine plane $S$, yielding a projective analogue of the Gauss map (\Cref{prop:tangent_interpretation}). Finally, \Cref{sec:differential_galois} develops the differential Galois consequences: \Cref{thm:galois_integrability} identifies Liouvillian integrability of the nonlinear ODE with Liouvillian solvability of $L$, and shows that, when $\kappa\in\mathbb{C}(x)$, Kovacic's algorithm provides an effective and complete decision procedure.

\section{Geometric framework}\label{preliminaries}
Consider a scalar first-order ODE of the form:
\begin{equation} \label{eq:ode1}
    u'(x) = \phi(x, u)
\end{equation}
where $u$ is the dependent variable, $x$ is the independent variable, and $\phi(x, u)$ is a sufficiently smooth function.

Throughout, we assume that $\phi$ is defined on an open set $D\subset\mathbb{R}^2$ and sufficiently smooth. All assertions are understood locally: we may replace $D$ and the intervals of definition of solutions by suitable smaller open sets/intervals ensuring that every quantity appearing is defined and sufficiently smooth, without further comment. We will also assume implicitly that $D$ is such that for each $x\in \mathbb R$ the set
\[
D_x:=\{u\in\mathbb{R}:(x,u)\in D\}
\]
is connected (e.g., an interval, possibly empty or unbounded).

To simplify notation, we will often omit the explicit dependence on $(x,u)$ in expressions involving $\phi$ and its partial derivatives when no confusion can arise.

Equation \eqref{eq:ode1} is geometrically encoded by the vector field $A$ on the $(x,u)$ plane:
\begin{equation} \label{eq:vector_field}
    A = \partial_x + \phi\partial_u,
\end{equation}
being the integral curves of this vector field in correspondence with its solutions. In recent years, a geometric framework has been developed to study first-order ODEs through the association of a surface, in the sense of a 2-dimensional Riemannian manifold \cite{Bayrakdar2018a,integrationJacobifields,surfaces2025}, such that $A$ is a geodesic vector field with respect to the induced metric.

In this framework, the metric tensor $g$ associated with the first-order ODE is given by:
\begin{align}\label{metricBay}
    g&=( 1+ \phi^2)dx\otimes dx-\phi dx\otimes du-\phi du\otimes dx+du\otimes du,
\end{align}
and the corresponding volume form is
\begin{equation}\label{volume_form}
    \Omega = \sqrt{\det(g)} dx \wedge du = dx \wedge du.
\end{equation}

The Gauss curvature $\mathcal K$ associated with the surface is given by the expression
\begin{equation} \label{eq:curvature_formula}
    \mathcal{K}(x,u) = -\partial_u(A(\phi)),
\end{equation}
providing significant insights into the behavior of the equation \cite{Bayrakdar2018a,surfaces2025,integrationJacobifields}.

In \cite{surfaces2025,integrationJacobifields} the notion of relative Jacobi field was introduced, and used to show that if the curvature $\mathcal{K}$ is constant, then the ODE \eqref{eq:ode1} is integrable by quadratures. More generally, in \cite{integrationJacobifields} the following result is established for the case where the curvature depends only on the independent variable:

\begin{theorem}[\cite{surfaces2025,integrationJacobifields}]\label{thm:jacobi_integration}
Let $u' = \phi(x,u)$ be a first-order ODE whose curvature satisfies $\mathcal{K}(x,u) = \kappa(x)$, and consider the associated Schrödinger-type equation
\begin{equation} \label{eq:linear_2ode_intro}
    y''(x) + \kappa(x)y(x) = 0.
\end{equation}
If $\delta$ is a non-zero solution of \eqref{eq:linear_2ode_intro}, then $\delta$ determines a relative Jacobi field from which an integrating factor for the original nonlinear equation \eqref{eq:ode1} can be constructed. In particular, a single non-zero solution of \eqref{eq:linear_2ode_intro} suffices to obtain the general solution of \eqref{eq:ode1} by quadratures.
\end{theorem}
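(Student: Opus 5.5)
Write $\phi_u:=\partial_u\phi$ and $\phi_{uu}:=\partial_u^2\phi$. The plan is to produce an explicit integrating factor from $\delta$ and then reduce the general solution to quadratures. The starting point is to unpack the curvature condition. Since $\mathcal K=-\partial_u(A(\phi))$ and $A(\phi)=\phi_x+\phi\phi_u$, the hypothesis $\mathcal K=\kappa(x)$ integrates in $u$ to
\[
\phi_x+\phi\,\phi_u=-\kappa(x)\,u+c(x)
\]
for some function $c(x)$. Here the connectedness of each $D_x$ is used to obtain a single $c(x)$ for each $x$. Differentiating this identity in $u$ gives the key relation $A(\phi_u)+\phi_u^2=-\kappa$. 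In other words, the divergence $p=\phi_u$ of $A$ satisfies the Riccati equation $p'+p^2+\kappa=0$ along every integral curve.

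Next I construct the relative Jacobi field. For a solution $\delta$ of $\delta''+\kappa\delta=0$, I look for a vector field $J=\alpha\,\partial_u$, with $\alpha$ a function on $D$, satisfying $[A,J]=\lambda A$. Since $[A,\alpha\partial_u]=(A(\alpha)-\alpha\phi_u)\partial_u$, the condition $[A,J]\equiv 0 \pmod A$ reduces to the linear PDE $A(\alpha)=\alpha\,\phi_u$. Equivalently, $\alpha$ must evolve along solutions like a variation of solutions, i.e.\ like the linearized equation. The natural ansatz is
\[
\alpha=\delta'(x)-\delta(x)\,\phi_u(x,u).
\]
Using the Riccati relation, one computes $A(\alpha)=\delta''-\delta'\phi_u-\delta A(\phi_u)=\delta''-\delta'\phi_u+\delta(\phi_u^2+\kappa)$. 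Substituting $\delta''=-\kappa\delta$ turns this into $\phi_u(\delta\phi_u-\delta')=-\phi_u\alpha$. This has the wrong sign, so I would adjust the ansatz. Two options are to use $\alpha=\delta\phi_u-\delta'$ divided by a factor, or to invert it: $\beta=1/\alpha$ satisfies $A(\beta)=\phi_u\beta$. Thus $J=\beta\,\partial_u$ with $\beta=(\delta\phi_u-\delta')^{-1}$ is a genuine symmetry of the foliation, i.e.\ a (relative) Jacobi field. Getting this sign and normalization bookkeeping right, and making sure $\beta$ is defined away from the zero set of $\delta\phi_u-\delta'$ after localizing $D$, is the step I expect to require the most care.

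Finally, a vector field $J$ transversal to $A$ with $[A,J]\in\operatorname{span}\{A\}$ gives the integrating factor
\[
\mu=\frac{1}{\Omega(A,J)}=\frac1\beta=\delta\phi_u-\delta'
\]
for the 1-form $\omega=du-\phi\,dx$, by the classical Lie argument. I would verify closedness directly: $d(\mu\omega)=0$ amounts to $\partial_x\mu+\partial_u(\mu\phi)=0$, i.e.\ $A(\mu)+\mu\phi_u=0$, which is exactly the equation checked above. Since $\mu(du-\phi\,dx)$ is closed on the (locally simply connected) domain, a first integral $F$ is obtained by a line integral, i.e.\ by quadratures. The level sets $F=\text{const}$ give the general solution. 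The case where $\mu$ vanishes identically cannot occur for $\delta\neq0$ unless $\phi_u=\delta'/\delta$ on an open set; in that case the equation is linear, $u'=(\delta'/\delta)u+b(x)$, and is integrable by quadratures directly. This completes the plan.
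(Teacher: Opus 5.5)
Your argument is correct, and it is a direct verification rather than the geometric route the paper relies on. The identity $A(\phi_u)+\phi_u^2=-\kappa$ follows immediately from \eqref{eq:curvature_formula}, as in the proof of \Cref{thm:riccati_connection}, so integrating to $c(x)$ and differentiating back is unnecessary. This identity gives $A(\mu)=-\phi_u\mu$ for $\mu=\delta\phi_u-\delta'$, which is exactly the condition $d\bigl(\mu(du-\phi\,dx)\bigr)=0$. Your first ``wrong-sign'' computation was therefore already the result. Inverting to $\beta$ only serves to exhibit the symmetry $\mu^{-1}\partial_u$, and the sign of $\beta$ is irrelevant.

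The paper gives no proof of its own: the statement is imported from \cite{surfaces2025,integrationJacobifields}. The route it alludes to in \Cref{sec:jacobi} is geometric, and there it is $\delta(x)\,\partial_u$ itself, not your symmetry $J=\beta\,\partial_u$, that plays the role of the relative Jacobi field. The reason is as follows. The frame $\{A,\partial_u\}$ is $g$-orthonormal and $A$ is geodesic, so $\partial_u$ is a parallel unit normal and $x$ is arclength along each integral curve. Hence the Jacobi equation for $j\,\partial_u$ reads $A^2(j)+\mathcal K j=0$, which for $j=\delta(x)$ is precisely $\delta''+\kappa\delta=0$. The integrating factor is then the coefficient in $[A,\delta\,\partial_u]=(\delta'-\delta\phi_u)\,\partial_u$, i.e.\ your $\mu$ up to sign.

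\textbf{What the geometric packaging buys.} It explains where the formula comes from and shows that it holds more generally. For any $j$ with $A^2(j)+\mathcal K j=0$, the function $h:=A(j)-\phi_u j$ satisfies $A(h)=-\phi_u h$, with no assumption on $\mathcal K$. The hypothesis $\mathcal K=\kappa(x)$ serves only to make such a $j$ available from a linear ODE in $x$ alone.

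\textbf{What your route buys.} It gives an explicit formula with a two-line check and no Riemannian machinery. It also makes the degeneracy transparent. Since $A(\mu)=-\phi_u\mu$, the zero set of $\mu$ is a union of whole solution curves, namely those $f$ with $p_f=\delta'/\delta$. These are given implicitly by $\delta\,\phi_u(x,u)=\delta'(x)$, and away from them your first integral is nondegenerate.
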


As is well-known, equation \eqref{eq:linear_2ode_intro} has a strong relation to the Riccati equation 
\begin{equation}\label{Riccatieq}
p'(x) + p(x)^2 + \kappa(x) = 0
\end{equation}
via the standard substitution $p = y'/y$; see, e.g., \cite{Teschl2012}. In the following section, we will show a more direct link of the curvature condition $\mathcal{K}(x,u) = \kappa(x)$ to the Riccati equation \eqref{Riccatieq}.

\section{Riccati dynamics along solutions}\label{section:riccati}

In this section, we show that the curvature depends exclusively on the independent variable precisely when the divergence of $A$ satisfies the Riccati equation \eqref{Riccatieq}, which in turn establishes a direct link to operator \eqref{eq:schrodinger_operator}.

Observe that the divergence of the vector field $A$ defined in \eqref{eq:vector_field} with respect to the volume form $\Omega$ given in \eqref{volume_form}
is given by:
\begin{equation} \label{eq:divergence}
    \text{div}\,A =  \phi_u. 
\end{equation}

On the other hand, given a solution $f(x)$ of the ODE \eqref{eq:ode1}, we can evaluate the divergence along the solution curve $(x,f(x))$, which provides a function of the independent variable $x$ alone. This function captures the local expansion or contraction of the flow generated by $A$ along the solution trajectory, and we will denote it by:
\begin{equation} \label{eq:p_definition}
    p_f(x) := \phi_u(x, f(x)).
\end{equation}

Observe that this function depends on the choice of the particular solution $f$; when the underlying solution $f$ is fixed in the discussion, we will suppress the subscript and simply write $p(x)$ for simplicity, except when the dependence on $f$ needs to be emphasized.

The following theorem establishes a fundamental equivalence between the curvature condition $\mathcal{K}(x,u) = \kappa(x)$ and the behavior of the functions $p_f(x)$.

\begin{theorem}\label{thm:riccati_connection}
The curvature corresponding to equation \eqref{eq:ode1} satisfies $\mathcal{K}(x,u)=\kappa(x)$ on $D$ if and only if, for every solution $f$, the function $p_f(x):=\phi_u(x,f(x))$ satisfies the Riccati equation \eqref{Riccatieq} on its interval of definition.
\end{theorem}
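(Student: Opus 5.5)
The plan is to compute the derivative of $p_f$ along a solution and compare it with the curvature formula \eqref{eq:curvature_formula}. Let $f$ be a solution of \eqref{eq:ode1}, so $f'(x)=\phi(x,f(x))$. By the chain rule,
\[
p_f'(x)=\phi_{xu}(x,f(x))+\phi_{uu}(x,f(x))\,\phi(x,f(x)) .
\]
Next we expand $\mathcal{K}$ from \eqref{eq:curvature_formula}. Since $A(\phi)=\phi_x+\phi\phi_u$, we get
\[
\mathcal{K}=-\partial_u(\phi_x+\phi\phi_u)=-\phi_{xu}-\phi_u^2-\phi\phi_{uu}.
\]
The key identity we aim for is therefore
\[
\mathcal{K}=-\bigl(A(\phi_u)+\phi_u^2\bigr),
\]
which holds pointwise on $D$. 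Evaluating it along $(x,f(x))$, and using $A(\phi_u)(x,f(x))=p_f'(x)$ because $(x,f(x))$ is an integral curve of $A$, gives
\[
p_f'(x)+p_f(x)^2+\mathcal{K}(x,f(x))=0 .
\]

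For the direction ``$\Rightarrow$'': if $\mathcal{K}(x,u)=\kappa(x)$ on $D$, the last display is exactly \eqref{Riccatieq} for every solution $f$ on its interval of definition.

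For the direction ``$\Leftarrow$'': suppose every $p_f$ satisfies \eqref{Riccatieq}. Then the displayed identity gives $\mathcal{K}(x,f(x))=\kappa(x)$ along every solution curve. By local existence (Picard--Lindelöf, using the smoothness of $\phi$), each point $(x_0,u_0)\in D$ lies on the graph of some solution $f$ with $f(x_0)=u_0$. Hence $\mathcal{K}(x_0,u_0)=\kappa(x_0)$ at every point of $D$.

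The main obstacle is not computational; it is the covering step in ``$\Leftarrow$''. It needs the solution curves to pass through every point of $D$, and that is guaranteed by the standing smoothness assumptions. The function $\kappa$ is given a priori in the statement, so no well-definedness issue arises.
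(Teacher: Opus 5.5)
Your proposal is correct and follows the paper's proof step for step. You use the chain rule to get $p_f'=A(\phi_u)(x,f(x))$ and the identity $\mathcal{K}=-A(\phi_u)-\phi_u^2$, which together yield $p_f'+p_f^2+\mathcal{K}(x,f(x))=0$ along each solution; for the converse you then invoke local existence of a solution through every point of $D$, exactly as the paper does.
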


\begin{proof}
Let $f$ be any solution of \eqref{eq:ode1}. By the chain rule and $f'(x)=\phi(x,f(x))$,
\[
p_f'(x)=\frac{d}{dx}\phi_u(x,f(x))=\phi_{xu}(x,f(x))+\phi(x,f(x))\,\phi_{uu}(x,f(x))=A(\phi_u)(x,f(x)),
\]
where $A$ is the associated vector field \eqref{eq:vector_field}.

By equation \eqref{eq:curvature_formula} we have
\[
\mathcal{K}(x,u)=-\partial_u\bigl(A(\phi)\bigr)(x,u)=-(\phi_{xu}+\phi_u^{2}+\phi\,\phi_{uu})(x,u)=-A(\phi_u)(x,u)-\phi_u^2(x,u). 
\]
Evaluating at $u=f(x)$ and using the previous identities yields, along the solution curve,
\begin{equation}\label{eq:riccati_identity}
p_f'(x)+p_f(x)^2+\mathcal{K}(x,f(x))=0.
\end{equation}
If $\mathcal{K}(x,u)=\kappa(x)$, this reduces to the Riccati equation \eqref{Riccatieq} for $p_f$, for every solution $f$.

Conversely, assume that there exists a function $\kappa=\kappa(x)$ such that for every solution $f$ the corresponding $p_f$ satisfies \eqref{Riccatieq}. Comparing with the identity \eqref{eq:riccati_identity} gives $\mathcal{K}(x,f(x))=\kappa(x)$ along each solution curve. Given any point $(x_0,u_0)$ in the domain of $\phi$, local existence for the initial value problem $u'=\phi(x,u)$, $u(x_0)=u_0$, provides a solution $f$ with $f(x_0)=u_0$, hence
\[
\mathcal{K}(x_0,u_0)=\mathcal{K}(x_0,f(x_0))=\kappa(x_0).
\]
Since $(x_0,u_0)$ is arbitrary, it follows that $\mathcal{K}(x,u)=\kappa(x)$ on the domain.
\end{proof}

\begin{remark}
Theorem \ref{thm:riccati_connection} establishes a mapping from solutions $f(x)$ of the first-order ODE \eqref{eq:ode1} to solutions $p_f(x)$ of the Riccati equation \eqref{Riccatieq}. However, this mapping is not injective in general. In fact, the function $p_f$ is the same for all solutions $f$ if $\phi$ is affine with respect to its second argument, i.e., $\phi_{uu} = 0$.
\end{remark}

\begin{example}[Linear equations]\label{ex:linear}
Let $\phi(x,u)=B(x)u+C(x)$. Since $\phi_{uu}=0$, we have
\[
\phi_x+\phi\,\phi_u = \bigl(B'(x)+B(x)^2\bigr)u+\bigl(C'(x)+B(x)C(x)\bigr),
\]
whence $\mathcal{K}(x,u)=-\bigl(B'(x)+B(x)^2\bigr)=:\kappa(x)$. The divergence $p_f(x)=\phi_u(x,f(x))=B(x)$ is identical for every solution~$f$, so the map of \Cref{thm:riccati_connection} collapses to a single Riccati solution, illustrating the situation described in the preceding remark.

As a concrete instance, take $B(x)=-\tan x$ and $C(x)=0$, so that $u'=-\tan(x)\,u$. Then $\kappa(x)=-(-\sec^2 x+\tan^2 x)=1$, and the associated Schr\"odinger equation is $y''+y=0$.
\end{example}

\begin{example}[A nonlinear Euler--Cauchy flow]\label{ex:euler_cauchy}
Consider the nonlinear ODE defined for $x>0$ in the region $u^2>4x$:
\begin{equation}\label{eq:euler_cauchy_ode}
u' = \frac{u}{2x}+\frac{3}{2x}\sqrt{u^2-4x}.
\end{equation}
A direct computation yields $\phi_x+\phi\,\phi_u = 2u/x^2$, whence $\mathcal{K}(x,u) = -2/x^2 = :\kappa(x)$. 

On the other hand, the ODE admits the one-parameter family of solutions
\begin{equation}\label{eq:euler_cauchy_sol}
u(x;s) = s x^2 + s^{-1} x^{-1}, \qquad s \neq 0,
\end{equation}
as can be verified by substitution. For each solution, the divergence $p_s(x) = \phi_u(x, u(x;s))$ is given by
\[
p_s(x) = \frac{2s^2 x^3 + 1}{x(s^2 x^3 - 1)},
\]
and one checks that $p_s$ satisfies $p_s' + p_s^2 - 2/x^2 = 0$. In contrast to the linear case of \Cref{ex:linear}, the divergence now depends on the chosen particular solution so, as established in \Cref{thm:riccati_connection}, the map $s \mapsto p_s$ produces a genuinely one-parameter family of solutions to the Riccati equation \eqref{Riccatieq}. The geometric aspects of this example will be developed in later sections.
\end{example}

To close this section, we note that the Riccati equation \eqref{Riccatieq} for the flow divergence is linearized by the standard substitution $p(x)=y'(x)/y(x)$, which yields the Schr\"odinger-type equation \eqref{eq:linear_2ode_intro}. Consequently, in addition to the integrating-factor approach of \cite{surfaces2025,integrationJacobifields}, we obtain here a further link between the class of equations satisfying \(\mathcal{K}(x,u)=\kappa(x)\) and the operator $L=\frac{d^{2}}{dx^{2}}+\kappa(x)$.

\section{Second-order linear embedding}\label{sec:embedding}

In this section we investigate a further connection between the class of first-order ODEs satisfying $\mathcal{K}(x,u)=\kappa(x)$ and the associated linear operator $L=\frac{d^2}{dx^2}+\kappa(x)$. The following proposition gives a complete characterization of the class in terms of an affine embedding of the solution set.

\begin{proposition}\label{prop:nonhomog}
Given the ODE \eqref{eq:ode1}, we have that $\mathcal{K}(x,u)=\kappa(x)$ on $D$ if and only if there exists a continuous function $c\colon I\to\mathbb{R}$ such that every solution of \eqref{eq:ode1} with graph in $D$ satisfies
\begin{equation}\label{eq:nonhomog}
    L(u) = c(x)
\end{equation}
wherever defined. Equivalently, the solution set of \eqref{eq:ode1} is contained in the affine space $S := u_p + V$, where $V := \ker L$ and $u_p$ is any particular solution of \eqref{eq:nonhomog}.
\end{proposition}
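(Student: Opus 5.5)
The plan is to work with the single function $\Psi := A(\phi) + \kappa(x)\,u = \phi_x + \phi\,\phi_u + \kappa(x)\,u$ on $D$. For any solution $u=f(x)$ of \eqref{eq:ode1} we have $f' = \phi(x,f)$, and the chain rule gives
\[
f''(x) = \phi_x(x,f(x)) + \phi_u(x,f(x))\,\phi(x,f(x)) = A(\phi)(x,f(x)).
\]
Hence $L(f)(x) = \Psi(x,f(x))$. By \eqref{eq:curvature_formula} we also have $\partial_u \Psi = -\mathcal{K} + \kappa(x)$. Both directions of the proposition follow from these two identities.

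For the forward direction, assume $\mathcal{K}(x,u)=\kappa(x)$ on $D$. Then $\partial_u\Psi \equiv 0$. Since each fibre $D_x$ is connected (this is where the standing assumption on $D$ is used), $\Psi$ depends on $x$ alone, say $\Psi(x,u)=c(x)$ for $x$ in the projection $I$ of $D$. The function $c$ is continuous because $\Psi$ is continuous and $c(x)=\Psi(x,u_0(x))$ for any local continuous section, for instance a local solution curve. Consequently $L(f)(x)=c(x)$ for every solution $f$ whose graph lies in $D$.

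For the converse, suppose $L(f)=c$ for every solution $f$. Then $\Psi(x,f(x)) = c(x)$ along every solution curve. By local existence through an arbitrary point $(x_0,u_0)\in D$, exactly as in the proof of \Cref{thm:riccati_connection}, we get $\Psi(x,u)=c(x)$ on all of $D$. Differentiating in $u$ gives $0=\partial_u\Psi=\kappa-\mathcal{K}$, so $\mathcal{K}(x,u)=\kappa(x)$.

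The ``equivalently'' clause is the standard structure of the solution set of a linear non-homogeneous equation: $\{u : L(u)=c\} = u_p + \ker L$ for any particular solution $u_p$. Such a $u_p$ exists on $I$ by linear ODE theory, or simply by taking any solution of \eqref{eq:ode1}. Conversely, if every solution lies in $u_p+V$, then $L(f)=L(u_p)=:c$ for every solution $f$, which is the previous condition.

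The only delicate step is the passage from $\partial_u\Psi=0$ to $\Psi$ being a function of $x$ alone. This requires connectedness of the fibres $D_x$ together with the local reading of the domain of $c$, which is why the statement is phrased locally. The computations themselves are routine.
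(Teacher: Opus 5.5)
Your proposal is correct and follows essentially the same argument as the paper. Your $\Psi$ is exactly the paper's auxiliary function $C(x,u)=\phi_x+\phi\,\phi_u+\kappa(x)u$, and both directions rest on the identities $L(f)=\Psi(x,f(x))$ and $\partial_u\Psi=\kappa-\mathcal{K}$, together with connectedness of the fibres $D_x$ and local existence through every point of $D$ (your added remarks on continuity of $c$ and on the reverse direction of the ``equivalently'' clause are minor refinements the paper leaves implicit).
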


\begin{proof}
Define $C(x,u):=\phi_x+\phi\,\phi_u+\kappa(x)u$ on $D$. If $u$ is a solution of \eqref{eq:ode1}, differentiating $u'=\phi(x,u)$ gives
\begin{equation}\label{eq:second_order_identity_Ca}
u''(x)+\kappa(x)u(x)=C\bigl(x,u(x)\bigr).
\end{equation}

Suppose first that $\mathcal{K}(x,u)=\kappa(x)$. Then
\[
\partial_u C(x,u)=\phi_{xu}+\phi_u^2+\phi\,\phi_{uu}+\kappa(x)=-\mathcal{K}(x,u)+\kappa(x)=0,
\]
so $C$ is independent of $u$ on each connected fiber $D_x$. Setting $c(x):=C(x,u)$ for any $(x,u)\in D$, equation \eqref{eq:second_order_identity_Ca} becomes $L(u)=c(x)$.

Conversely, suppose every solution satisfies $L(u)=c(x)$. Substituting into \eqref{eq:second_order_identity_Ca} gives $C(x,u(x))=c(x)$ along every solution. By local existence, every $(x_0,u_0)\in D$ lies on some solution, so $C(x,u)\equiv c(x)$ on all of $D$. Differentiating in $u$ yields
\[
0=\partial_u C(x,u)=-\mathcal{K}(x,u)+\kappa(x),
\]
hence $\mathcal{K}(x,u)=\kappa(x)$ on $D$.

The affine-space reformulation follows because the solution set of $L(u)=c(x)$ is exactly $u_p+V=S$.
\end{proof}

\begin{corollary}\label{cor:one_dim_locus}
If $\mathcal{K}(x,u)=\kappa(x)$, the solution set $\Gamma$ of \eqref{eq:ode1} is a smooth curve inside the two-dimensional affine space $S$. Every $u\in\Gamma$ has the form
\begin{equation}\label{eq:affine_sol}
  u(x)=C_1 y_1(x)+C_2 y_2(x)+u_p(x),
\end{equation}
where $(C_1,C_2)$ are constrained to a one-dimensional locus determined by $\phi$.
\end{corollary}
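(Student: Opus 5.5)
The plan is to deduce the corollary directly from \Cref{prop:nonhomog}, using a basis of $V=\ker L$ and the fact that solutions of a first-order ODE form a one-parameter family.

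First, by \Cref{prop:nonhomog}, every solution $u$ of \eqref{eq:ode1} satisfies $L(u)=c(x)$. Fix a particular solution $u_p$ of \eqref{eq:nonhomog} and a fundamental system $y_1,y_2$ of $L(y)=0$ on a common interval $I$. Standard linear theory gives $\dim V=2$, so $S=u_p+V$ is a two-dimensional affine space. Hence every $u\in\Gamma$ has the form \eqref{eq:affine_sol} for a unique pair $(C_1,C_2)$. Uniqueness holds because the Wronskian $W(y_1,y_2)$ does not vanish.

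Next I identify the locus concretely. I would work locally: fix $x_0\in I$ and restrict to solutions whose graphs pass over $x_0$ within $D$. The linear map $V\to\mathbb{R}^2$, $y\mapsto (y(x_0),y'(x_0))$, is an isomorphism. So the coordinates $(C_1,C_2)$ of $u-u_p$ are obtained by solving
\[
\begin{pmatrix} y_1(x_0) & y_2(x_0)\\ y_1'(x_0) & y_2'(x_0)\end{pmatrix}\begin{pmatrix}C_1\\ C_2\end{pmatrix}=\begin{pmatrix}u(x_0)-u_p(x_0)\\ u'(x_0)-u_p'(x_0)\end{pmatrix}.
\]
For $u\in\Gamma$ we have $u'(x_0)=\phi(x_0,u(x_0))$. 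Setting $u_0:=u(x_0)\in D_{x_0}$, the solution $u$ is determined by $u_0$, by uniqueness for the initial value problem. Therefore $\Gamma$ is parametrised by $u_0\mapsto (C_1(u_0),C_2(u_0))$, where
\[
(C_1,C_2)^{T}=W(x_0)^{-1}\bigl(u_0-u_p(x_0),\ \phi(x_0,u_0)-u_p'(x_0)\bigr)^{T}.
\]
This map is smooth because $\phi$ is. Its image lies in the set where the corresponding element of $S$ has initial data on the graph of $\phi(x_0,\cdot)$, so the locus is determined by $\phi$.

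The remaining step is smoothness, meaning that this is a regular curve. The derivative of the parametrisation is $W(x_0)^{-1}(1,\phi_u(x_0,u_0))^{T}$, which never vanishes because $W(x_0)$ is invertible and the first entry is $1$. The map is also injective, since distinct $u_0$ give distinct first components of the initial data. So the image is an embedded curve. It is the image under an affine isomorphism of the graph $\{(u_0,\phi(x_0,u_0))\}$ in the plane of initial data.

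The main obstacle is bookkeeping about domains, since different solutions may live on different intervals. I would handle this in the paper's local spirit: restrict to solutions defined on a common interval containing $x_0$, and identify elements of $S$ by their extensions as solutions of the linear equation, which exist on all of $I$. That way $\Gamma\subset S$ makes sense as a set of functions on $I$.
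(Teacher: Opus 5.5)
Your proposal is correct and follows the paper's proof essentially step for step: you invoke \Cref{prop:nonhomog} to place $\Gamma$ in $S$, then solve the Wronskian system at a fixed $x_0$, using $u'(x_0)=\phi(x_0,u_0)$, to parametrize $\Gamma$ smoothly and injectively by $u_0\in D_{x_0}$. Your extra check that the derivative $W(x_0)^{-1}(1,\phi_u(x_0,u_0))^{T}$ never vanishes is a worthwhile addition. It shows the image is an immersed, indeed embedded, curve, namely the affine image of the graph of $\phi(x_0,\cdot)$, whereas the paper only asserts a smooth injective map, which on its own would not exclude cusps.
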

\begin{proof}
By \Cref{prop:nonhomog}, $\Gamma\subset S$, so every $u\in\Gamma$ has the form \eqref{eq:affine_sol} for unique constants $(C_1,C_2)$. Fix $x_0$; evaluating \eqref{eq:affine_sol} and its derivative at $x_0$ gives
\[
  C_1 y_1(x_0)+C_2 y_2(x_0) = u_0 - u_p(x_0), \qquad
  C_1 y_1'(x_0)+C_2 y_2'(x_0) = \phi(x_0,u_0) - u_p'(x_0),
\]
where $u_0 := u(x_0)$ and we used $u'(x_0)=\phi(x_0,u_0)$. Since the coefficient matrix of this $2\times 2$ linear system is the Wronskian $W(x_0)\neq 0$, the pair $(C_1,C_2)$ is uniquely and smoothly determined by $u_0\in D_{x_0}$. Thus the assignment $u_0\mapsto(C_1(u_0),C_2(u_0))$ is a smooth injective map from the one-dimensional fiber $D_{x_0}$, and $\Gamma$ is its image — a smooth curve in the parameter plane.
\end{proof}

\Cref{prop:nonhomog} thus characterizes the class $\mathcal{K}=\kappa(x)$ by the affine embedding $\Gamma\subset S$; \Cref{cor:one_dim_locus} makes precise that $\Gamma$ is, within this two-dimensional affine space, a smooth curve of codimension one.

We can also show that every smooth curve in $S$ arises in this way.

\begin{proposition}\label{prop:converse_embedding}
Given a smooth immersed curve $\Gamma\subset S$, there exists, locally, a first-order ODE $u'=\phi(x,u)$ with $\mathcal{K}(x,u)=\kappa(x)$ whose solution set is precisely~$\Gamma$.
\end{proposition}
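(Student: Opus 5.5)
Fix a basis $y_1,y_2$ of $V=\ker L$ and a particular solution $u_p$ of \eqref{eq:nonhomog}, so that every element of $S$ is $u_p+C_1y_1+C_2y_2$. The immersed curve $\Gamma\subset S$ then corresponds to a smooth immersed curve $s\mapsto (C_1(s),C_2(s))$ in the parameter plane, for $s$ in some interval $J$. Set
\[
U(x,s):=u_p(x)+C_1(s)y_1(x)+C_2(s)y_2(x).
\]
The plan is to show that, locally, the family $\{U(\cdot,s)\}_{s\in J}$ foliates a region of the $(x,u)$-plane. The ODE is then \emph{defined} by $\phi(x,u):=U_x(x,s(x,u))$, where $s(x,u)$ inverts $u=U(x,s)$. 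With this choice each $U(\cdot,s)$ is a solution. Since solutions of $u'=\phi$ are unique through each point, and the leaves fill the region, the solution set is exactly $\Gamma$ (locally).

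The main step is the inversion, which requires $U_s=C_1'y_1+C_2'y_2\neq 0$ at the chosen point $(x_0,s_0)$. Because the curve is immersed, $(C_1'(s_0),C_2'(s_0))\neq(0,0)$. The function $x\mapsto C_1'(s_0)y_1(x)+C_2'(s_0)y_2(x)$ is then a nontrivial solution of $L(y)=0$. Such a solution cannot vanish together with its derivative at any point, so its zeros are isolated. We may therefore pick $x_0$ with $U_s(x_0,s_0)\neq0$. The implicit function theorem then gives a smooth $s(x,u)$ on a neighbourhood $D$ of $(x_0,U(x_0,s_0))$, and $\phi$ is smooth there. Shrinking $D$ as needed ensures the fibers $D_x$ are intervals, as the paper's standing conventions require. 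This is where the local nature of the statement enters: near zeros of $U_s$ the leaves may fold, and no single-valued $\phi$ exists there.

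It remains to check the curvature condition, which we do by \Cref{prop:nonhomog}. Every solution of $u'=\phi$ in $D$ is of the form $U(\cdot,s)\in S$, so it satisfies $L(u)=c(x)$ with $c:=L(u_p)$. This $c$ is a fixed function of $x$ alone, independent of the solution. By the ``if'' direction of \Cref{prop:nonhomog} we conclude $\mathcal K(x,u)=\kappa(x)$ on $D$.

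A direct check is also available. Differentiating $u'=\phi$ gives $L(u)=\phi_x+\phi\phi_u+\kappa u$ along solutions, so this quantity equals $c(x)$ on all of $D$. Applying $\partial_u$ then yields $\mathcal K=\kappa$, exactly as in the proof of \Cref{prop:nonhomog}.
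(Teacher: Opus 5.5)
Your proposal is correct and follows the paper's proof essentially step by step. Like the paper, you parametrize $\Gamma$, note that $\partial_s f$ is a nontrivial element of $V=\ker L$ with isolated zeros, invert $u=f(x;s)$ by the implicit function theorem, define $\phi$ as $f_x$ evaluated at the inverse, and conclude $\mathcal{K}=\kappa$ from the converse direction of \Cref{prop:nonhomog}. You also make explicit two points the paper leaves implicit: the uniqueness argument showing that every solution in $D$ is one of the leaves $U(\cdot,s)$, and why zeros of a nontrivial element of $V$ are isolated.
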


\begin{proof}
Let $(y_1,y_2)$ be a fundamental system for $L(y)=0$, and $u_p$ a particular solution of $L(u)=c$. Parametrizing $\Gamma$ by $s\mapsto(c_1(s),c_2(s))$, each point of the curve corresponds to the function
\[
f(x;s):= c_1(s)\,y_1(x)+c_2(s)\,y_2(x)+u_p(x)\in S.
\]
The partial derivative $\partial_s f(x;s)=c_1'(s)\,y_1(x)+c_2'(s)\,y_2(x)$ is a non-trivial element of $V=\ker L$, since $(c_1'(s),c_2'(s))\neq(0,0)$ (the curve is immersed) and $y_1,y_2$ are linearly independent. In particular $\partial_s f(x;s)\neq 0$ for all $x$ outside the discrete zero set of this element of~$V$. By the implicit function theorem, for each $(x_0,s_0)$ with $\partial_s f(x_0;s_0)\neq 0$, the map $s\mapsto f(x_0;s)$ is a local diffeomorphism, so $s$ can be expressed locally as a smooth function $s=\sigma(x,u)$ satisfying $f(x;\sigma(x,u))=u$. Setting
\[
\phi(x,u):= f_x(x;\sigma(x,u)),
\]
we obtain a smooth function on a domain $D\subset I\times\mathbb{R}$. By construction, $f(\cdot\,;s)$ solves $u'=\phi(x,u)$ for each~$s$, and every solution satisfies $L(u)=c(x)$ since $f(\cdot\,;s)\in S$. By the converse implication in \Cref{prop:nonhomog}, $\mathcal{K}(x,u)=\kappa(x)$ on~$D$.
\end{proof}

Together with \Cref{prop:nonhomog} and \Cref{cor:one_dim_locus}, \Cref{prop:converse_embedding} establishes, once $\kappa$ and $c$ are fixed, a local bijection between smooth immersed curves in the affine plane $S$ and first-order ODEs with curvature $\mathcal{K}(x,u)=\kappa(x)$, in such a way that the curve~$\Gamma$ encodes the nonlinearity~$\phi$.

\begin{example}[Continuing with \Cref{ex:linear}]\label{ex:linear_embedding}
For the equation 
$$
u'=-\tan(x)\,u,
$$we have $\kappa=1$ and the inhomogeneous coefficient $C(x)=0$. A direct computation shows that $c(x)=C'(x)+B(x)C(x)=0$, so \eqref{eq:nonhomog} reduces to the homogeneous equation
\[
u''+u=0,
\]
with fundamental system $\{y_1,y_2\}=\{\cos x,\sin x\}$ and particular solution $u_p=0$. The general solution of the first-order ODE is $u(x;s)=s\cos x$, which lies in $S=V=\ker L$ as expected. In the representation \eqref{eq:affine_sol} (with $u_p=0$), this corresponds to $(C_1,C_2)=(s,0)$: the locus $\Gamma$ is the $C_1$-axis in the parameter plane, a straight line. This reflects the fact that for a linear ODE ($\phi_{uu}=0$) the constraint in \Cref{cor:one_dim_locus} is itself linear.
\end{example}

\begin{example}[Continuing with \Cref{ex:euler_cauchy}]\label{ex:euler_cauchy_embedding}
By \Cref{prop:nonhomog}, every solution of \eqref{eq:euler_cauchy_ode} satisfies the Euler--Cauchy equation
\begin{equation}\label{eq:euler_cauchy_2nd}
u''(x)-\frac{2}{x^2}\,u(x) = 0.
\end{equation}
The indicial roots are $r=2$ and $r=-1$, yielding the fundamental system $\{y_1,y_2\}=\{x^2,\,x^{-1}\}$ with Wronskian $W=-3$. Since the non-homogeneous term vanishes ($c(x)=0$), we have $u_p=0$ and $S=V$. The one-parameter family of solutions of \eqref{eq:euler_cauchy_ode} is given by \eqref{eq:euler_cauchy_sol}, as one verifies by direct substitution. In the $(C_1,C_2)$-plane, the locus $\Gamma$ is the hyperbola $C_1 C_2=1$, a genuine nonlinear curve, in contrast with the linear case of \Cref{ex:linear}.
\end{example}

\begin{example}[Two curves in the same affine space]\label{ex:complete}
Consider the following two ODEs on $x>0$: the nonlinear equation
\begin{equation}\label{eq:hyperbolic_ode}
  u' = \frac{u}{2x} + \frac{5}{2}x^2 + \frac{3}{2x}\sqrt{(u-x^3)^2 - 4x},
\end{equation}
and the linear equation
\begin{equation}\label{eq:linear_locus_ode}
  u' = \frac{2x^3 - 1}{x(x^3 + 1)}\, u + \frac{x^5 + 4x^2}{x^3 + 1}.
\end{equation}
A straightforward computation gives $\mathcal{K}(x,u)=-2/x^{2}$ for both ODEs. 

The associated operator is $L=\frac{d^{2}}{dx^{2}}-\frac{2}{x^{2}}$, with fundamental system $\{y_1,y_2\}=\{x^{2},x^{-1}\}$. By \Cref{prop:nonhomog}, the solutions of each ODE satisfy $L(u)=c(x)$ for some function $c$ depending only on $x$. By using the definition of $C(x,u)$ in the proof of \Cref{prop:nonhomog}, one checks that $c(x)=4x$ for both \eqref{eq:hyperbolic_ode} and \eqref{eq:linear_locus_ode}. Taking $u_p=x^{3}$ as a particular solution of $L(u)=4x$, both ODEs embed into the same affine space
\[
  S \;=\; x^{3}+V, \qquad V=\operatorname{span}\{x^{2},\,x^{-1}\}.
\]

It remains to identify the curves $\Gamma$ and $\Gamma'$ inside $S$ traced by each ODE \eqref{eq:hyperbolic_ode} and \eqref{eq:linear_locus_ode}, respectively. Every solution of either ODE has the form
\[
  u(x;s) = x^{3}+C_1(s)\,x^{2}+C_2(s)\,x^{-1},
\]
and substituting into each ODE determines the constraint on $(C_1,C_2)$:
\begin{itemize}
\item \textit{Locus $\Gamma$ of \eqref{eq:hyperbolic_ode}.}
Set $u = x^3+C_1 x^2+C_2 x^{-1}$, so $u' = 3x^2+2C_1 x - C_2 x^{-2}$.
Substituting into \eqref{eq:hyperbolic_ode}, and after straightforward algebraic manipulation, we obtain the identity
\[
  \Gamma \;=\; \{(C_1,C_2)\in\mathbb{R}^2 : C_1 C_2 = 1\},
\]
a rectangular hyperbola in the parameter plane.

\item \textit{Locus $\Gamma'$ of \eqref{eq:linear_locus_ode}.}
Substituting $u,u'$ into \eqref{eq:linear_locus_ode}, and simplifying, we obtain the identity $C_1=C_2.$

Thus $\Gamma'$ is the diagonal line
\[
  \Gamma' \;=\; \{(C_1,C_2)\in\mathbb{R}^2 : C_1 = C_2\}.
\]

\end{itemize}

\medskip
The two ODEs share the same curvature $\kappa(x)=-2/x^{2}$ and the same affine space $S$. What distinguishes them is the shape of the locus inside~$S$: a hyperbola $\Gamma$ for \eqref{eq:hyperbolic_ode}, a line $\Gamma'$ for \eqref{eq:linear_locus_ode}. This illustrates the bijection of \Cref{prop:converse_embedding}: once $\kappa$ and $c$ are fixed, the affine space $S$ is determined, and the curve $\Gamma\subset S$ encodes the nonlinearity of the ODE.
\end{example}

\section{Relative Jacobi fields and integrating factors}\label{sec:jacobi}

In Section \ref{preliminaries} we mentioned that the curvature condition $\mathcal{K}(x,u) = \kappa(x)$ allows for the construction of an integrating factor from the solutions of the Schrödinger-type equation \eqref{eq:linear_2ode_intro}, as it was shown in \cite{surfaces2025,integrationJacobifields} using the notion of relative Jacobi field. Indeed, the Schrödinger-type equation \eqref{eq:linear_2ode_intro} is precisely the equation governing the relative Jacobi fields in the geometric framework of \cite{surfaces2025,integrationJacobifields}, in the case of first-order ODEs with curvature depending only on $x$. This way, the connection between the class of equations distinguished in our work and the class of operators $L=\frac{d^2}{dx^2}+\kappa(x)$ is threefold.

A classical fact about the Riccati equation is that knowledge of a single particular solution enables the construction of the general solution by quadratures. We can leverage the latter connection, together with \Cref{thm:riccati_connection}, to show that an similar property holds for the nonlinear ODE \eqref{eq:ode1}:

\begin{theorem}\label{thm:integration_from_particular}
Let $u' = \phi(x,u)$ satisfy $\mathcal{K}(x,u) = \kappa(x)$, and let $f(x)$ be a particular solution. Then the general solution can be obtained by quadratures.
\end{theorem}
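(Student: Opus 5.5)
The plan is to turn the particular solution $f$ into a non-zero solution of the Schr\"odinger equation \eqref{eq:linear_2ode_intro}, and then apply \Cref{thm:jacobi_integration}.

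\emph{Step 1: a Riccati solution from $f$.} By \Cref{thm:riccati_connection}, the function $p_f(x)=\phi_u(x,f(x))$ solves the Riccati equation \eqref{Riccatieq}, namely $p'+p^2+\kappa=0$. Computing $p_f$ from $f$ only requires differentiating $\phi$ and substituting, so no integration is involved.

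\emph{Step 2: linearization.} I will reverse the substitution $p=y'/y$ and set
\[
\delta(x):=\exp\Bigl(\int^x p_f(t)\,dt\Bigr),
\]
which costs one quadrature. A direct check gives
\[
\delta''=(p_f'+p_f^2)\,\delta=-\kappa\,\delta,
\]
so $\delta$ solves \eqref{eq:linear_2ode_intro}. Being an exponential, $\delta$ never vanishes, so it is a non-zero solution.

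\emph{Step 3: conclusion.} \Cref{thm:jacobi_integration} now says that $\delta$ yields an integrating factor for $u'=\phi(x,u)$. The general solution then follows by quadratures, because the first integral is obtained by integrating an exact form.

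\emph{Alternative route, if the last step needs to be explicit.} One can avoid invoking the black-box integrating factor by using \Cref{prop:nonhomog}.
\begin{itemize}
\item From $\delta$, reduction of order gives a second solution $y_2=\delta\int \delta^{-2}$ of $L(y)=0$, at the cost of one more quadrature.
\item Since $f$ itself satisfies $L(f)=c$, we may take $u_p=f$, so $S=f+\operatorname{span}\{\delta,y_2\}$.
\item Every solution then has the form $u=f+C_1\delta+C_2y_2$, as in \Cref{cor:one_dim_locus}.
\item The curve $\Gamma$ is recovered without further integration: at a fixed $x$, solve for $(C_1,C_2)$ using the Wronskian system in the proof of \Cref{cor:one_dim_locus}, with $u'=\phi(x,u)$. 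This expresses $(C_1(x,u),C_2(x,u))$ algebraically in terms of $x$, $u$, $\phi$, $\delta$, $y_2$.
\item Constancy of the pair along solutions means that the relation between $C_1$ and $C_2$ is a first integral, for instance $C_2$ as a function of $C_1$ along $\Gamma$.
\end{itemize}

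\emph{Expected obstacle.} The main difficulty is only in making this last step rigorous, namely showing that the map $(x,u)\mapsto(C_1,C_2)$ yields a genuine (non-constant) first integral. I expect this to follow from the injectivity established in \Cref{cor:one_dim_locus}, which makes $u_0\mapsto (C_1,C_2)$ a smooth injective parametrization of $\Gamma$ at fixed $x_0$. Apart from that, Steps 1 and 2 are routine computations.
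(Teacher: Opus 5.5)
Your Steps 1--3 are exactly the paper's proof: \Cref{thm:riccati_connection} makes $p_f=\phi_u(x,f(x))$ a Riccati solution, one quadrature gives the non-vanishing solution $\delta_f=\exp\int p_f$ of $y''+\kappa y=0$, and \Cref{thm:jacobi_integration} then supplies the integrating factor. Your alternative route is also correct and avoids the cited result, and the obstacle you expect does not actually arise: each of $C_1(x,u)$, $C_2(x,u)$ from the Wronskian system is itself a first integral, and they cannot both have vanishing $u$-derivative because $W(\delta,y_2)=1$. Better still, by uniqueness for the linear problem $L(u)=c$, $u(x_0)=u_0$, $u'(x_0)=\phi(x_0,u_0)$, the solution through $(x_0,u_0)$ is explicitly $f+C_1(x_0,u_0)\,\delta+C_2(x_0,u_0)\,y_2$.
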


\begin{proof}
Let $p_f(x) = \phi_u(x, f(x))$ be the divergence of the vector field along $f(x)$, as defined in \eqref{eq:p_definition}. By \Cref{thm:riccati_connection}, $p_f(x)$ is a particular solution of the Riccati equation
\begin{equation}
    p'(x) + p(x)^2 + \kappa(x) = 0.
\end{equation}
The standard logarithmic substitution $p(x) = \delta'(x)/\delta(x)$ yields a solution of the Schrödinger equation $\delta'' + \kappa(x)\delta = 0$, given explicitly by
\begin{equation}\label{eq:delta_integral}
    \delta_f(x) := \exp\!\left(\int_{x_0}^x \phi_u(\tau, f(\tau))\, d\tau\right).
\end{equation}
In the geometric framework of \cite{surfaces2025,integrationJacobifields}, $\delta_f$ gives rise to a relative Jacobi field, from which an integrating factor for \eqref{eq:ode1} can be constructed and the general solution obtained by quadratures (see \Cref{thm:jacobi_integration}).
\end{proof}

\begin{example}[Continuing with \Cref{ex:euler_cauchy}]\label{ex:euler_cauchy_integration}
We illustrate \Cref{thm:integration_from_particular} with the Euler--Cauchy flow \eqref{eq:euler_cauchy_ode}, taking the particular solution $f(x)=x^2+x^{-1}$ (the case $s=1$ in \eqref{eq:euler_cauchy_sol}). By \Cref{ex:euler_cauchy}, the divergence along $f$ is
\[
p_f(x)=\frac{2x^3+1}{x(x^3-1)}.
\]
Observing that $p_f(x)=\frac{d}{dx}\ln\!\left|\frac{x^3-1}{x}\right|$, the quadrature \eqref{eq:delta_integral} yields
\[
\delta_f(x)=\frac{x^3-1}{x}=x^2-\frac{1}{x},
\]
and one verifies directly that $\delta_f''-\frac{2}{x^2}\,\delta_f=0$. By \Cref{thm:jacobi_integration}, this non-zero solution of the Schrödinger equation provides an integrating factor for the nonlinear ODE \eqref{eq:euler_cauchy_ode}, and the general solution is obtained by quadratures, recovering the full solution family \eqref{eq:euler_cauchy_sol}.

\end{example}

\section{Projective interpretation of \Cref{thm:riccati_connection}}\label{geometric_interpretation}

In this section we interpret \Cref{thm:riccati_connection} within the classical framework of projective geometry on the solution space $V$. Recall that the set of solutions of the Riccati equation \eqref{Riccatieq} is parametrized by the projective line $\mathbb{P}(V)$, where $V$ denotes the solution space of $y''+\kappa(x)y=0$ \cite{Ince1956,Reid1972}. We show that the Riccati solutions $p_s$ arising from \Cref{thm:riccati_connection} correspond precisely to tangent directions of the curve $\Gamma$ in the affine plane $S$, via a projective analogue of the Gauss map.

Recall from Proposition~\ref{prop:nonhomog} that every solution of the first-order ODE \eqref{eq:ode1} satisfies the non-homogeneous equation \eqref{eq:nonhomog}, whose solution set is the two-dimensional affine space $S=u_p+V$. Here $V$ is the vector space of solutions of the homogeneous equation $y''+\kappa(x)y=0$, and $u_p$ is a fixed particular solution of \eqref{eq:nonhomog}. We fix a basis $\{y_1,y_2\}$ of $V$.

For any $u\in S$ with $u\neq u_p$, define
\begin{equation}\label{eq:projectivization}
w:=\frac{(u-u_p)'}{u-u_p}.
\end{equation}
Since $y:=u-u_p\in V\setminus\{0\}$ satisfies $y''+\kappa(x)y=0$, the substitution $w=y'/y$ shows that $w$ is a solution of the Riccati equation \eqref{Riccatieq}. Moreover, replacing $y$ by $\lambda y$ for any nonzero scalar $\lambda$ leaves $w$ unchanged, so $w$ depends only on the one-dimensional subspace $[y]\in\mathbb{P}(V)$. Conversely, every solution of \eqref{Riccatieq} arises in this way. Thus, the set of solutions of the Riccati equation is identified with the projective line $\mathbb{P}(V)\cong\mathbb{RP}^1$.

Concretely, writing $u-u_p=C_1 y_1+C_2 y_2$, we have
\begin{equation}\label{eq:w_projcoords}
w=\frac{C_1 y_1'+C_2 y_2'}{C_1 y_1+C_2 y_2},
\end{equation}
and the corresponding point in $\mathbb{P}(V)$ has homogeneous coordinates $[C_1:C_2]$.

Let $f(x;s)$, $s\in J$, be a smooth one-parameter family of solutions of the first-order ODE \eqref{eq:ode1}, where $J$ is an open interval. By Proposition~\ref{prop:nonhomog}, each $f(\cdot\,;s)\in S$, so we can write
\[
f(x;s)=u_p(x)+c_1(s)\,y_1(x)+c_2(s)\,y_2(x)
\]
for smooth scalar functions $c_1,c_2:J\to\mathbb{R}$. The map $s\mapsto(c_1(s),c_2(s))$ parametrizes $\Gamma$ as a curve in the affine coordinate plane of $S$.

The following result identifies the Riccati solutions provided by Theorem~\ref{thm:riccati_connection} within this projective framework.

\begin{proposition}\label{prop:tangent_interpretation}
For each $s\in J$, the Riccati solution $p_s(x):=\phi_u(x,f(x;s))$ provided by Theorem~\ref{thm:riccati_connection} has homogeneous coordinates $[c_1'(s):c_2'(s)]$ in $\mathbb{P}(V)$. That is, $p_s$ corresponds to the tangent direction of $\Gamma$ at the point $f(\cdot\,;s)$.
\end{proposition}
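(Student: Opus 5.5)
The plan is to differentiate the solution family with respect to the parameter $s$. For each fixed $s$, consider the variation $\eta_s(x):=\partial_s f(x;s)$. Differentiating the identity $f'(x;s)=\phi(x,f(x;s))$ with respect to $s$ (smoothness allows exchanging $\partial_s$ and $\partial_x$) gives the linearized equation
\[
\eta_s'(x)=\phi_u\bigl(x,f(x;s)\bigr)\,\eta_s(x)=p_s(x)\,\eta_s(x).
\]
Hence, wherever $\eta_s\neq 0$, we obtain $p_s=\eta_s'/\eta_s$.

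The next step identifies $\eta_s$ inside $V$. Differentiating the representation $f(x;s)=u_p(x)+c_1(s)y_1(x)+c_2(s)y_2(x)$ in $s$ gives
\[
\eta_s=c_1'(s)\,y_1+c_2'(s)\,y_2,
\]
and $\eta_s$ lies in $V$ because $u_p$ does not depend on $s$. We then compare with the projectivization \eqref{eq:w_projcoords}. The Riccati solution attached to the point $[C_1:C_2]=[c_1'(s):c_2'(s)]$ of $\mathbb{P}(V)$ is
\[
w=\frac{c_1'y_1'+c_2'y_2'}{c_1'y_1+c_2'y_2}=\frac{\eta_s'}{\eta_s}=p_s .
\]
Since $y\mapsto y'/y$ depends only on $[y]$ and the correspondence between Riccati solutions and $\mathbb{P}(V)$ is bijective, $p_s$ has homogeneous coordinates $[c_1'(s):c_2'(s)]$. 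Because $(c_1'(s),c_2'(s))$ is the tangent vector to $\Gamma$ in the affine plane of $S$, this is exactly the tangent direction.

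The main obstacle is nondegeneracy: we need $\eta_s\not\equiv 0$, that is, $(c_1'(s),c_2'(s))\neq(0,0)$. The linear ODE $\eta_s'=p_s\eta_s$ handles this. Its solution is $\eta_s(x)=\eta_s(x_0)\exp\int_{x_0}^x p_s$, so $\eta_s$ either vanishes identically or never vanishes. The zero-free case also removes any worry about dividing by $\eta_s$.

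To exclude $\eta_s\equiv 0$, I will assume, as is implicit in saying that the family parametrizes $\Gamma$, that the family is regular, i.e.\ $\partial_s f(x_0;s)\neq 0$. This is the natural nondegeneracy condition: the initial values $f(x_0;s)$ sweep out the fiber $D_{x_0}$ injectively with nonzero derivative, as in \Cref{cor:one_dim_locus}. Under this condition $\eta_s$ is nowhere zero on the interval of definition, and the identity $p_s=\eta_s'/\eta_s$ holds on the whole interval, not just away from a discrete set. This also explains why the correspondence between $p_s$ and the tangent direction is well defined pointwise along $\Gamma$ for an immersed parametrization.
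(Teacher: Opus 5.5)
Your argument is correct and is the same as the paper's: differentiate $f'(x;s)=\phi(x,f(x;s))$ in $s$, identify $\partial_s f=c_1'(s)y_1+c_2'(s)y_2\in V$, and read off $p_s=\eta_s'/\eta_s$ as the Riccati solution with coordinates $[c_1'(s):c_2'(s)]$ via \eqref{eq:w_projcoords}. You add two points the paper leaves implicit: $\eta_s'=p_s\eta_s$ forces $\eta_s$ to be either identically zero or zero-free, and you assume regularity explicitly, $(c_1'(s),c_2'(s))\neq(0,0)$. Together these justify the division the paper performs without comment.
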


\begin{proof}
Differentiating the identity $f'(x;s)=\phi(x,f(x;s))$ with respect to the parameter $s$ gives
\[
\frac{\partial}{\partial s}f'(x;s)=\phi_u(x,f(x;s))\,\frac{\partial f}{\partial s}(x;s).
\]
Since $f(x;s)=u_p(x)+c_1(s)\,y_1(x)+c_2(s)\,y_2(x)$, the left-hand side equals $c_1'(s)\,y_1'(x)+c_2'(s)\,y_2'(x)$, while $\partial_s f=c_1'(s)\,y_1(x)+c_2'(s)\,y_2(x)$. Therefore
\[
\phi_u(x,f(x;s))=\frac{c_1'(s)\,y_1'(x)+c_2'(s)\,y_2'(x)}{c_1'(s)\,y_1(x)+c_2'(s)\,y_2(x)},
\]
which by \eqref{eq:w_projcoords} is the Riccati solution with homogeneous coordinates $[c_1'(s):c_2'(s)]$.
\end{proof}

\begin{remark}\label{rem:gauss_map}
Proposition~\ref{prop:tangent_interpretation} reveals a geometric distinction between two different maps from $\Gamma$ to $\mathbb{P}(V)$. Each solution $f(\cdot\,;s)\in \Gamma$ determines a point $(c_1(s),c_2(s))$ on the curve $\Gamma\subset S$, which via the projectivization \eqref{eq:projectivization} corresponds to the Riccati solution with coordinates $[c_1(s):c_2(s)]\in\mathbb{P}(V)$. The Riccati solution $p_s$ from Theorem~\ref{thm:riccati_connection}, however, corresponds not to this point but to the tangent direction $[c_1'(s):c_2'(s)]$. The induced map $\Gamma\to\mathbb{P}(V)$ sending each point of $\Gamma$ to its tangent direction is thus a projective analogue of the Gauss map.
\end{remark}

\begin{example}[Continuing with \Cref{ex:linear}]\label{ex:linear_proj}
In the projective framework, since $\phi_u=B(x)$ is independent of the solution, the map $f(\cdot\,;s)\mapsto p_s$ from \Cref{thm:riccati_connection} collapses to the single Riccati solution $p(x)=B(x)$. Because $\Gamma$ is a line in the $(c_1,c_2)$-plane (see \Cref{ex:linear_embedding}), all its tangent directions coincide, and the projective image of $\Gamma$ under the Gauss map of \Cref{rem:gauss_map} is a single point in $\mathbb{P}(V)$.
\end{example}

\begin{example}[A nonlinear case]\label{ex:nonlinear_proj}
Consider the nonlinear ODE
\begin{equation}\label{eq:example_sqrt}
u'=\tfrac{1}{2}\sqrt{1-u^2}.
\end{equation}
A direct computation gives $\phi_x+\phi\phi_u=-u/4$, whence $\mathcal{K}=1/4=:\kappa$. Since $c(x)=\phi_x+\phi\phi_u+\kappa\, u=0$, the non-homogeneous equation \eqref{eq:nonhomog} reduces to $y''+\frac{1}{4}y=0$, so $u_p=0$ and $S=V$.

With the basis $\{y_1,y_2\}=\{\sin(x/2),\,-\cos(x/2)\}$, the one-parameter family of solutions of \eqref{eq:example_sqrt} is
\[
f(x;s)=\sin\!\bigl(\tfrac{x-s}{2}\bigr)=\cos\!\bigl(\tfrac{s}{2}\bigr)\,y_1(x)+\sin\!\bigl(\tfrac{s}{2}\bigr)\,y_2(x),
\]
so $(c_1(s),c_2(s))=(\cos(s/2),\sin(s/2))$ and $\Gamma$ is the unit circle in the $(c_1,c_2)$-plane. Computing $p_s$ directly from $\phi_u=-u/(2\sqrt{1-u^2})$ evaluated at $u=f(x;s)=\sin((x-s)/2)$ gives
\[
p_s(x)=\frac{-\sin\!\bigl(\frac{x-s}{2}\bigr)}{2\cos\!\bigl(\frac{x-s}{2}\bigr)}=\tfrac{1}{2}\tan\!\bigl(\tfrac{s-x}{2}\bigr).
\]
One can verify that $p_s$ has homogeneous coordinates $[c_1'(s):c_2'(s)]=[-\sin(s/2):\cos(s/2)]$ in $\mathbb{P}(V)$, as predicted by \Cref{prop:tangent_interpretation}. In contrast to the linear case of \Cref{ex:linear_proj}, $\Gamma$ is a genuine nonlinear curve.
\end{example}

\begin{example}[Continuing with \Cref{ex:euler_cauchy}]\label{ex:euler_cauchy_proj}
Along the solution \eqref{eq:euler_cauchy_sol}, the divergence $p_s(x)=\phi_u(x,u(x;s))$ satisfies $p_s'+p_s^2-2/x^2=0$ by \Cref{thm:riccati_connection}. 
The tangent to $\Gamma$ at the point $(s,s^{-1})$ has direction $(c_1'(s),c_2'(s))=(1,-s^{-2})$, corresponding to $[s^2:-1]\in\mathbb{P}(V)$, in agreement with \Cref{prop:tangent_interpretation}. As $s$ varies over $(0,\infty)$, this tangent direction sweeps~$\mathbb{P}(V)$, reflecting the genuinely nonlinear character of the flow.
\end{example}

\section{Differential Galois aspects}\label{sec:differential_galois}

The embedding $\Gamma\subset S$ established in \Cref{prop:nonhomog} shows that the solutions of the nonlinear first-order ODE \eqref{eq:ode1} are built from the elements of $V$. In this section we use differential Galois theory to make the integrability consequences of this embedding precise, and we show that, when $\kappa\in\mathbb{C}(x)$, where $\mathbb{C}(x)$ denotes the field of rational functions with complex coefficients, Kovacic's algorithm \cite{Kovacic1986} provides an effective decision procedure for the Liouvillian integrability of the class of equations satisfying $\mathcal{K}(x,u)=\kappa(x)$. A key consequence is that the embedding controls the analytic complexity of the nonlinear equation from both sides.

We briefly recall the necessary background; for a comprehensive treatment see \cite{vanderPutSinger2003}. Let $F$ be a differential field with derivation $\partial$ and algebraically closed field of constants $\mathcal{C}$. A \emph{Liouvillian extension} of $F$ is a differential field extension $E\supset F$ obtained by a finite tower of intermediate extensions, each adjoining either an integral (an element $a$ with $a'\in F_i$), an exponential of an integral (an element $a$ with $a'/a\in F_i$), or an algebraic element over the preceding field $F_i$. The \emph{Picard--Vessiot extension} for a homogeneous linear ODE $L(y)=0$ over $F$ is the minimal differential field extension containing a full set of solutions; its \emph{differential Galois group} $\operatorname{Gal}(E/F)$ is a linear algebraic group over $\mathcal{C}$. A fundamental result of differential Galois theory states that $L(y)=0$ admits a non-zero Liouvillian solution if and only if the identity component $G^0$ of $\operatorname{Gal}(E/F)$ is solvable \cite{vanderPutSinger2003}. For Liouvillian first integrals of nonlinear equations, see \cite{Singer1992}. The Galoisian obstruction approach was developed in the Hamiltonian context by Morales-Ruiz and Ramis \cite{MoralesRuizRamis2001}; the present results provide an analogue for first-order nonlinear ODEs. For a treatment of differential Galois theory applied specifically to Schr\"odinger-type operators $L = \frac{d^2}{dx^2} + \kappa(x)$, see \cite{AcostaHumanez2011}.

For the operator $L$ with $\kappa\in\mathbb{C}(x)$, Kovacic's algorithm \cite{Kovacic1986} provides a complete effective procedure to determine whether $L(y)=0$ admits a non-zero Liouvillian solution and to compute it explicitly when it does. The algorithm is specific to $\mathbb{C}(x)$: it exploits partial fractions and the orders of poles (including the singularity at $\infty$) to decide the Galois case; no analogous procedure is available for a general differential field $F$. Thus, while the criterion of solvable $G^0$ holds in full generality over any $F$, algorithmic decidability requires $\kappa\in\mathbb{C}(x)$. The algorithm classifies the differential Galois group $G\subseteq\operatorname{SL}(2,\mathbb{C})$ into one of four mutually exclusive cases:
\begin{enumerate}[label=\textup{(\roman*)}]
\item\label{kov:reducible} \emph{Reducible.} $G$ is conjugate to a subgroup of the Borel group; the Riccati equation $w'+w^2+\kappa=0$ admits a solution in~$\mathbb{C}(x)$, and $y''+\kappa y=0$ has a solution of the form $e^{\int\omega}$ with $\omega\in\mathbb{C}(x)$.
\item\label{kov:imprimitive} \emph{Imprimitive.} $G$ is conjugate to a subgroup of the infinite dihedral group $D_\infty$; the Riccati equation has no solution in $\mathbb{C}(x)$ but admits one in a quadratic extension of~$\mathbb{C}(x)$.
\item\label{kov:finite} \emph{Primitive finite.} $G$ is a finite subgroup of $\operatorname{SL}(2,\mathbb{C})$ (a central extension of $A_4$, $S_4$, or $A_5$); all solutions of $y''+\kappa y=0$ are algebraic over~$\mathbb{C}(x)$.
\item\label{kov:full} \emph{Full.} $G=\operatorname{SL}(2,\mathbb{C})$; there are no Liouvillian solutions.
\end{enumerate}
In cases \ref{kov:reducible}--\ref{kov:finite}, the equation admits Liouvillian solutions; in case~\ref{kov:full}, it does not.

The following result formalizes the integrability consequences of the embedding $\Gamma\subset S$.

\begin{theorem}\label{thm:galois_integrability}
Consider the first-order ODE $u'=\phi(x,u)$ with $\mathcal{K}(x,u)=\kappa(x)$, and assume $\kappa$ belongs to a differential field $F$ with algebraically closed constant field~$\mathcal{C}$.
\begin{enumerate}[label=\textup{(\alph*)}]
\item\label{it:galois_forward} If $L(y)=0$ admits a non-zero Liouvillian solution over $F$, then the first-order ODE \eqref{eq:ode1} is integrable by quadratures.
\item\label{it:galois_converse} Conversely, if any two distinct solutions of the first-order ODE belong to a Liouvillian extension of $F$, then $L(y)=0$ admits a non-zero Liouvillian solution.
\end{enumerate}
In particular, when $\kappa\in\mathbb{C}(x)$, Kovacic's algorithm applied to $L(y)=0$ decides the Liouvillian integrability of the nonlinear first-order ODE.
\end{theorem}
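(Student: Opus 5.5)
For part \ref{it:galois_forward} the plan is to reduce directly to \Cref{thm:jacobi_integration}. If $L(y)=0$ has a non-zero Liouvillian solution $\delta$ over $F$, then \Cref{thm:jacobi_integration} turns $\delta$ into a relative Jacobi field and hence into an integrating factor for \eqref{eq:ode1}. I would make the quadratures explicit by recording the tower of extensions involved. Starting from $F$ (or $F(\phi)$, the field generated by $\kappa$ and the coefficients of the equation), adjoin $\delta$ through its Liouvillian tower. Then adjoin the integral $\int \delta^{-2}$, which gives the second solution $\delta\int\delta^{-2}$ of $L$ by reduction of order. Finally adjoin the integrals needed to produce the first integral from the integrating factor. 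Each step is an integral, an exponential of an integral, or an algebraic adjunction, so the general solution is obtained by quadratures.

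An alternative, more self-contained route uses \Cref{prop:nonhomog}. Having a full basis $y_1,y_2$ of $V$ in a Liouvillian extension, variation of parameters gives $u_p = -y_1\int y_2 c/W + y_2\int y_1 c/W$, with $W$ constant because $L$ has no first-order term. This is again Liouvillian over the field containing $c=\phi_x+\phi\phi_u+\kappa u$. By \Cref{cor:one_dim_locus}, every solution is then $u_p+C_1y_1+C_2y_2$ with $(C_1,C_2)$ on the locus $\Gamma$. Solving the $2\times2$ Wronskian system at a point $x_0$ shows that $(C_1,C_2)$ are explicit algebraic (in general, non-Liouvillian but elementary-in-$\phi$) expressions in $(x,u,u')$. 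So $C_1(x,u)$, obtained by substituting $u'=\phi$, is a first integral built from Liouvillian data. I would present this as a corollary-style remark and rely on \Cref{thm:jacobi_integration} for the formal claim.

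For part \ref{it:galois_converse}, let $f_1\neq f_2$ be solutions lying in a Liouvillian extension $E\supset F$. By \Cref{prop:nonhomog} both satisfy $L(u)=c$, so $y:=f_1-f_2$ lies in $V\setminus\{0\}$. Since $E$ is closed under subtraction, $y$ is a non-zero Liouvillian solution of $L(y)=0$, which already proves the claim. One technical point needs care: the derivatives and compositions used to define $c$ should not be required to lie in $F$. The difference argument avoids this entirely, because only $L(y)=0$ with $\kappa\in F$ is used. An alternative, via \Cref{thm:riccati_connection}, takes $p_f=\phi_u(x,f)$ and $\delta_f=\exp\int p_f$, but that would require $\phi_u(x,f)$ to be in $E$, so the difference argument is preferable.

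The final sentence about Kovacic's algorithm follows by combining the two parts. Kovacic's algorithm decides cases \ref{kov:reducible}--\ref{kov:finite} versus \ref{kov:full} for $\kappa\in\mathbb{C}(x)$. In the first three cases, part \ref{it:galois_forward} gives integrability by quadratures. In case \ref{kov:full}, part \ref{it:galois_converse} shows that no two distinct solutions can both be Liouvillian.

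The main obstacle is the precise meaning of ``integrable by quadratures'' in part \ref{it:galois_forward}. The integrating factor from \Cref{thm:jacobi_integration} involves $\phi$ itself, which need not lie in $F$. I would therefore state the result relative to the differential field generated by $F$ and $\phi$, and check that the construction of \cite{integrationJacobifields} uses only integrations and exponentiations over it.
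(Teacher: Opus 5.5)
Your proposal is correct and follows the paper's proof: part~(a) is reduced directly to \Cref{thm:jacobi_integration}, and part~(b) uses \Cref{prop:nonhomog} to show that the difference of two distinct Liouvillian solutions is a non-zero Liouvillian element of $V=\ker L$, with the Kovacic statement being (a) combined with the contrapositive of (b). Your extra material goes beyond what the paper writes but does not change the argument: the explicit Liouvillian tower, the Wronskian first integral from variation of parameters (use whichever of $C_1,C_2$ is non-constant on $\Gamma$, since one may be constant when $\Gamma$ is a line), and the caveat that ``quadratures'' in (a) is really relative to the field generated by $F$ and $\phi$.
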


\begin{proof}
For part~\ref{it:galois_forward}, suppose $\delta$ is a non-zero Liouvillian solution of $y''+\kappa(x)y=0$. By \Cref{thm:jacobi_integration}, $\delta$ can be used to obtain an integrating factor for the first-order ODE, and the general solution is obtained by quadratures.

For part~\ref{it:galois_converse}, let $u_0$ and $u_1$ be two distinct solutions of \eqref{eq:ode1} belonging to a Liouvillian extension of $F$. By \Cref{prop:nonhomog}, both satisfy $u''+\kappa(x)u=c(x)$. Therefore $y:=u_1-u_0$ is a non-zero element of the solution space $V$ of $y''+\kappa(x)y=0$, and it is Liouvillian since $u_0$ and $u_1$ are.
\end{proof}

\begin{remark}[Complexity floor]\label{rem:complexity_floor}
The embedding $\Gamma\subset S$ implies that the solutions of the nonlinear first-order ODE inherit a \emph{minimal} transcendental complexity dictated by $y''+\kappa(x)y=0$. In terms of Picard--Vessiot theory, any differential field extension containing two distinct solutions of \eqref{eq:ode1} must contain a non-zero solution of $y''+\kappa(x)y=0$ (by \Cref{thm:galois_integrability}\ref{it:galois_converse}). Consequently, if Kovacic's algorithm determines that $y''+\kappa(x)y=0$ has no Liouvillian solutions, then the general solution of the nonlinear first-order ODE is guaranteed to be non-Liouvillian. For instance, if $\kappa(x)=x$, the solutions are necessarily expressed in terms of Airy functions; no choice of the nonlinearity~$\phi$ can reduce them to elementary functions.
\end{remark}

\begin{remark}[No new transcendence]\label{rem:complexity_ceiling}
Conversely, the embedding constrains the solutions from above: every solution $u(x)$ of the first-order ODE belongs to the affine space $S=u_p+V$ and is therefore expressible in terms of the fundamental solutions of $y''+\kappa(x)y=0$ and the particular solution~$u_p$. In particular, the differential field generated by any single solution of \eqref{eq:ode1} is contained in the Picard--Vessiot extension for $y''+\kappa(x)y=0$ extended by~$c(x)$. The nonlinearity of $\phi$ thus acts as a selection rule, restricting $S$ to the one-dimensional locus $\Gamma$, but it cannot introduce differential-field complexity beyond that already present in the linear operator $L=\frac{d^2}{dx^2}+\kappa(x)$.
\end{remark}

\begin{remark}[Algorithmic decidability]\label{rem:decidability}
\Cref{thm:galois_integrability} renders the integrability of the class of first-order ODEs satisfying $\mathcal{K}(x,u)=\kappa(x)$ algorithmically decidable when $\kappa$ is rational: one applies Kovacic's algorithm to $\kappa$ and obtains, in finitely many steps, either an explicit Liouvillian solution of $y''+\kappa y=0$, and hence an integrating factor for \eqref{eq:ode1}, or a proof that no Liouvillian solution exists. This property is normally exclusive to linear equations; the geometric condition $\mathcal{K}(x,u)=\kappa(x)$ extends it to the present nonlinear class.
\end{remark}

\begin{remark}[Galois classification and projective geometry]\label{rem:galois_projective}
Assume $\kappa\in\mathbb{C}(x)$, so that Kovacic's classification applies.
The four cases can be interpreted within the projective framework of \Cref{geometric_interpretation}. The differential Galois group $G$ acts on the solution space $V$ of $y''+\kappa y=0$ and hence on $\mathbb{P}(V)$. The structure of the Galois orbits in $\mathbb{P}(V)$ determines the nature of the Riccati solutions:
\begin{itemize}
\item In case~\ref{kov:reducible}, $G$ fixes a point in $\mathbb{P}(V)$, corresponding to a Riccati solution in~$\mathbb{C}(x)$.
\item In case~\ref{kov:imprimitive}, $G$ permutes two points in $\mathbb{P}(V)$, corresponding to a pair of Riccati solutions exchanged by a quadratic automorphism.
\item In case~\ref{kov:finite}, $G$ is finite and acts on $\mathbb{P}(V)$ with finite orbits; all Riccati solutions are algebraic over~$\mathbb{C}(x)$.
\item In case~\ref{kov:full}, $G=\operatorname{SL}(2,\mathbb{C})$ acts transitively on $\mathbb{P}(V)$; there are no algebraic or Liouvillian Riccati solutions.
\end{itemize}
Via the tangent interpretation of \Cref{prop:tangent_interpretation}, the Galois group simultaneously constrains the tangent directions of the curve $\Gamma$ in the affine plane $S$, thereby governing the analytic complexity of the solutions of the nonlinear first-order ODE.
\end{remark}

We close this section with three examples illustrating the Galois-theoretic classification. They correspond, respectively, to Kovacic Cases~\ref{kov:reducible}, \ref{kov:imprimitive}, and~\ref{kov:full}.

\begin{example}[Continuing with \Cref{ex:euler_cauchy}: Kovacic Case~\ref{kov:reducible}]\label{ex:euler_cauchy_galois}
The Schr\"odinger equation \eqref{eq:euler_cauchy_2nd} has polynomial solutions (e.g., $y_1=x^2$), so the Riccati equation admits the rational solution $w=2/x$; this places it in Kovacic Case~\ref{kov:reducible} with a reducible Galois group. By \Cref{thm:galois_integrability}\ref{it:galois_forward}, the nonlinear ODE \eqref{eq:euler_cauchy_ode} is integrable by quadratures, as the explicit solution \eqref{eq:euler_cauchy_sol} confirms.
\end{example}

\begin{example}[Kovacic Case~\ref{kov:imprimitive}: an imprimitive Galois group]\label{ex:kovacic_ii}
Consider the nonlinear first-order ODE
\begin{equation}\label{eq:kovacic_ii_ode}
u' = u \left( \sqrt{x} \coth \left( \frac{4}{3}x^{3/2} \right) - \frac{1}{4x} \right) - \frac{\sqrt{x u^2 - 2\sqrt{x} \sinh \left( \frac{4}{3}x^{3/2} \right)}}{\sinh \left( \frac{4}{3}x^{3/2} \right)}
\end{equation}
defined on a suitable domain with $x>0$. A computation yields 
\[
\mathcal{K}(x,u) = -x - \frac{5}{16x^2} = \kappa(x).
\]
The associated Schr\"odinger equation is
\begin{equation}\label{eq:kovacic_ii_schrodinger}
y''(x) - \left( x + \frac{5}{16x^2} \right) y(x) = 0
\end{equation}
Applying Kovacic's algorithm to \eqref{eq:kovacic_ii_schrodinger}, one finds that Case~\ref{kov:reducible} fails (the Riccati equation admits no solution in $\mathbb{C}(x)$) but Case~\ref{kov:imprimitive} succeeds: the differential Galois group is conjugate to a subgroup of the infinite dihedral group $D_\infty\subset\operatorname{SL}(2,\mathbb{C})$. The Riccati equation admits a solution in a quadratic extension of $\mathbb{C}(x)$, and consequently \eqref{eq:kovacic_ii_schrodinger} possesses Liouvillian solutions. By \Cref{thm:galois_integrability}\ref{it:galois_forward}, the nonlinear ODE \eqref{eq:kovacic_ii_ode} is integrable by quadratures. In the projective framework of \Cref{rem:galois_projective}, the Galois group permutes two points in $\mathbb{P}(V)$, corresponding to a pair of Riccati solutions exchanged by a quadratic automorphism.
\end{example}

\begin{example}[Kovacic Case~\ref{kov:full}: Airy curvature and non-Liouvillian solutions]\label{ex:airy}
Consider any first-order ODE $u'=\phi(x,u)$ whose curvature satisfies $\mathcal{K}(x,u)=x$. The associated Schr\"odinger equation is the Airy equation
\begin{equation}\label{eq:airy_eq}
y''(x)+x\,y(x) = 0,
\end{equation}
whose differential Galois group over $\mathbb{C}(x)$ is $\operatorname{SL}(2,\mathbb{C})$: Kovacic's algorithm applied to \eqref{eq:airy_eq} determines that Cases~\ref{kov:reducible}--\ref{kov:finite} do not hold (Case~\ref{kov:full}; see, e.g., \cite{Kovacic1986}).

By \Cref{thm:galois_integrability}\ref{it:galois_converse}, no first-order ODE with curvature $\mathcal{K}(x,u)=x$ admits a Liouvillian general solution, regardless of the specific form of the nonlinearity~$\phi$. The solutions are necessarily expressed in terms of Airy functions; no choice of~$\phi$ can reduce them to elementary or Liouvillian expressions (cf.\ \Cref{rem:complexity_floor}).
\end{example}

\section{Conclusions}\label{sec:conclusions}

We have studied first-order ODEs $u'=\phi(x,u)$ satisfying the geometric condition $\mathcal{K}(x,u)=\kappa(x)$, i.e., the intrinsic Gauss curvature of the associated Riemannian surface depends only on the independent variable. The main results establish a threefold connection between this nonlinear class and the second-order linear operator $L=d^2/dx^2+\kappa(x)$:
\begin{enumerate}[label=\textup{(\roman*)}]
\item The divergence along every solution satisfies the Riccati equation $p'+p^2+\kappa=0$, which linearizes to the Schr\"odinger equation $L(y)=0$ via the classical substitution $p=y'/y$.
\item Every solution of the first-order ODE satisfies the non-homogeneous equation $L(u)=c(x)$, and the solution set $\Gamma$ is confined to the two-dimensional affine space $S$ determined by~$L$.
\item Solutions of $L(y)=0$ furnish integrating factors for the original nonlinear equation.
\end{enumerate}

This embedding $\Gamma\subset S$ has several consequences for integrability. It controls the analytic complexity of the nonlinear equation from both sides: the solutions can be no simpler and no more transcendental than those dictated by~$L$. The integrability by quadratures of the first-order ODE is equivalent to $L$ admitting a non-zero Liouvillian solution, and when $\kappa\in\mathbb{C}(x)$, Kovacic's algorithm provides a complete and effective decision procedure, a property normally exclusive to linear equations. Within the affine-space framework, the Riccati solutions arising from the curvature condition acquire a projective-geometric interpretation as tangent directions of the curve $\Gamma$, yielding an analogue of the Gauss map.

Several directions remain open. The affine space $S$ is determined by $\kappa$ alone, yet different nonlinearities $\phi$ sharing the same $\kappa$ trace different curves $\Gamma\subset S\cong\mathbb{R}^2$; the shape of $\Gamma$ therefore encodes information about $\phi$ beyond what $\kappa$ captures. A description of $\Gamma$ in terms of $\phi$ would yield a finer geometric invariant of the nonlinear ODE, intermediate between the coarse data of $\kappa$ and the full equation. A second direction concerns other conditions on the curvature operator $\mathcal{K}$. The present paper treats the case $\partial_u\mathcal{K}=0$; one may ask what integrability theory arises when $\mathcal{K}$ depends only on $u$, or satisfies some other algebraic or differential constraint. Such conditions would define new geometric classes of first-order ODEs, potentially admitting linearization procedures or Galois-theoretic treatments analogous to those developed here.

\section*{Declaration of generative AI and AI-assisted technologies in the manuscript preparation process}

During the preparation of this work the authors used Claude (Anthropic) in order to assist with the preparation and editing of the manuscript. After using this tool, the authors reviewed and edited the content as needed and take full responsibility for the content of the published article.

\section*{Declaration of competing interests}

The authors declare that they have no known competing financial interests or personal relationships that could have appeared to influence the work reported in this paper.

\section*{Funding}

This research did not receive any specific grant from funding agencies in the public, commercial, or not-for-profit sectors.

\bibliographystyle{elsarticle-num}
\bibliography{references}

\end{document}